\documentclass[12pt]{amsart}
\usepackage[margin=2cm]{geometry}
\usepackage{xcolor}
\usepackage{amssymb}
\usepackage{amsmath}
\usepackage{mathtools,soul}

\usepackage[colorlinks=true, linkcolor=red, citecolor=green, urlcolor=blue, pagebackref=false, breaklinks=true]{hyperref}

\newtheorem{theorem}{Theorem}[section]
\newtheorem{lemma}[theorem]{Lemma}
\newtheorem{corollary}[theorem]{Corollary}

\theoremstyle{definition}
\newtheorem{definition}[theorem]{Definition}

\newtheorem{conjecture}[theorem]{Conjecture}

\usepackage[capitalise]{cleveref}
\crefalias{remark}{remark}

\DeclareMathOperator{\NN}{\mathbb{N}}

\DeclareMathOperator{\Supp}{Supp}

\DeclareMathOperator{\ord}{ord}
\DeclareMathOperator{\Spec}{Spec}

\DeclareMathOperator{\Frac}{Frac}

\newcommand{\mf}{\mathfrak}
\newcommand{\mbb}{\mathbb}

\begin{document}

\title[Polynomial interpolation and Waldschmidt constant of points]{Polynomial interpolation and the Waldschmidt constant of points in projective space}

\author{T\`ai Huy H\`a}
\address{T\`ai Huy H\`a\\Tulane University, Department of Mathematics, 6823 St. Charles Avenue, New Orleans, LA 70118, USA}
\email{tha@tulane.edu}

\author{Aniketh Sivakumar}
\address{Aniketh Sivakumar\\Tulane University, Department of Mathematics, 6823 St. Charles Avenue, New Orleans, LA 70118, USA}
\email{asivakumar@tulane.edu}

\keywords{}

\subjclass[2020]{14N20, 13F20, 14C20}

\begin{abstract}
We establish Chudnovsky's Conjecture and, more generally, Demailly's Conjecture for any finite set of points in $\mathbb{P}^n_{k}$, where $k$ is an algebraically closed field of characteristic 0. The key idea of passing to positive characteristic, as well as the broad plan of the proof, was suggested by ChatGPT-5.6 Sol. All mathematical arguments and proofs in this paper were developed, written, and verified by the authors.
\end{abstract}

\maketitle

\section{Introduction}

In this paper, we prove in full generality long-standing conjectures of Chudnovsky and Demailly for any finite set of points in projective space. These conjectures concern lower bounds for the answer to the following fundamental problem in polynomial interpolation: \emph{given a finite set of distinct points $X\subset \mbb{P}^n_k$ and an integer $m\geq 1$, what is the least degree of a homogeneous polynomial that vanishes at each point of $X$ to order at least $m$?}

Let $S=k[x_0,\dots,x_n]$ be the homogeneous coordinate ring of $\mbb{P}^n_k$ and let $I=I(X)\subset S$ be the defining ideal of $X$. For a homogeneous ideal $J\subset S$, let $\alpha(J)$ denote its initial degree, that is, the least degree of a nonzero homogeneous polynomial in $J$. By the Zariski--Nagata Theorem \cite{Zar49,Nag62,EH79,DDSG+18}, when $k$ is perfect, the answer to the interpolation problem above is precisely $\alpha(I^{(m)})$, where $I^{(m)}$ denotes the $m$-th symbolic power of $I$.

A natural asymptotic invariant associated to this problem is the Waldschmidt constant
$$
\widehat{\alpha}(I):=\lim_{r\rightarrow\infty}\frac{\alpha(I^{(r)})}{r}.
$$
The limit always exists and, furthermore,
${\displaystyle \widehat{\alpha}(I)=\inf_{r\in\mbb{N}}\frac{\alpha(I^{(r)})}{r}}$;
see, for example, \cite[Lemma 2.3.1]{BH10}. Thus, the Waldschmidt constant measures the asymptotic growth of the least degree of a polynomial vanishing to a prescribed order at all the points of $X$.

The study of these numbers goes back to work of Waldschmidt and Skoda \cite{Wal77,Sko77} in complex analysis and Diophantine approximation. Chudnovsky \cite{Chu81} and Demailly \cite{Dem82} raised the following conjectures.

\begin{conjecture} \label{conj}
Let $I \subseteq S$ be the defining ideal of a finite set of points in $\mbb{P}^n_{\mbb{C}}$. 
\begin{enumerate}
    \item (Chudnovsky's Conjecture) The Waldschmidt constant of $I$ satisfies the inequality
    $${\displaystyle \widehat{\alpha}(I)\geq \frac{\alpha(I)+n-1}{n}.}$$
    \item (Demailly's Conjecture) For all $m \ge 1$, one has
    $${\displaystyle \widehat{\alpha}(I)\geq \frac{\alpha(I^{(m)})+n-1}{m+n-1}.}$$
\end{enumerate}
\end{conjecture}

When $m = 1$, Demailly's Conjecture gives precisely Chudnovsky's Conjecture. Our main result proves both Demailly's and Chudnovsky's Conjectures without any assumption on either the number or the position of the points.

\begin{theorem} \label{demailly}
Suppose that $k$ is an algebraically closed field of characteristic zero and $X$ is a finite set of distinct points in $\mbb{P}^n_k$ with defining ideal $I=I(X)\subset S$. Then, for any $m\in\mbb{N}$,
$$
\widehat{\alpha}(I)\geq \frac{\alpha(I^{(m)})+n-1}{m+n-1}.
$$
\end{theorem}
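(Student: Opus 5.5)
The plan is to pass to positive characteristic, prove there a Frobenius-power containment that carries an extra power of the homogeneous maximal ideal $\mf m=(x_0,\dots,x_n)$, and then return to characteristic zero by reduction mod $p$. The target containment is: for $p\gg 0$, every $q=p^e$ and every $m\ge 1$,
$$
I_p^{(q(m+n-1)-n+1)}\subseteq \mf m^{(q-1)(n-1)}\,\bigl(I_p^{(m)}\bigr)^{[q]},
$$
where $I_p$ is the ideal of the reduced point set. This is a Frobenius analogue of the Harbourne--Huneke containment $I^{(r(m+n-1))}\subseteq \mf m^{r(n-1)}(I^{(m)})^r$.

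Granting the containment, the degree count is as follows. The ideal $(I^{(m)})^{[q]}$ is generated by $q$-th powers of forms of degree at least $\alpha(I^{(m)})$. Hence a nonzero form in $I_p^{(N)}$, with $N=q(m+n-1)-n+1$, has degree at least $q\,\alpha(I_p^{(m)})+(q-1)(n-1)$. Since $\widehat\alpha=\inf_r \alpha(I^{(r)})/r$, dividing by $N$ and letting $q\to\infty$ gives
$$
\widehat\alpha(I_p)\ge \frac{\alpha(I_p^{(m)})+n-1}{m+n-1}.
$$

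To transfer this to characteristic zero, first note that both sides only involve finitely many points. So we may spread out the coordinates of $X$ over a finitely generated $\mbb Z$-algebra $A\subset k$. Reducing at a general maximal ideal of $A$ then gives a reduced point set $X_p$ over $\overline{\mbb F}_p$ with the same number of points.

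For fixed $r$, semicontinuity applied to the graded pieces of the ideals $I^{(r)}=\bigcap \mf p_i^r$ shows two things. First, $\alpha(I_p^{(r)})=\alpha(I^{(r)})$ for all $p$ outside a finite set depending on $r$. Second, one always has $\alpha(I_p^{(r)})\ge \alpha(I^{(r)})$ along specialization, since the dimension of the space of forms of given degree vanishing to order $r$ can only jump up under specialization.

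I then use $\widehat\alpha(I)=\inf_r\alpha(I^{(r)})/r$. Fix $\varepsilon>0$, choose $r$ with $\alpha(I^{(r)})/r<\widehat\alpha(I)+\varepsilon$, and choose $p$ good both for this $r$ and for the given $m$. This yields
$$
\widehat\alpha(I)+\varepsilon\ge \widehat\alpha(I_p)\ge \frac{\alpha(I_p^{(m)})+n-1}{m+n-1}=\frac{\alpha(I^{(m)})+n-1}{m+n-1}.
$$

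The containment itself will be checked locally. Because Frobenius is flat on $S$, the ideal $(I^{(m)})^{[q]}$ is unmixed with the same associated primes, namely the point ideals $\mf p_i$. The ideal $\mf m^{(q-1)(n-1)}$ is trivial after localizing at any $\mf p_i$, so the naive local check only yields the containment without the $\mf m$-factor. At a point with local regular parameters $y_1,\dots,y_n$, a pigeonhole argument proves that version: write each exponent of a monomial of degree $q(m+n-1)-n+1$ as $a_i=qb_i+c_i$ with $0\le c_i\le q-1$, and conclude $\sum b_i\ge m$. This gives $\mf p^{q(m+n-1)-n+1}\subseteq(\mf p^m)^{[q]}$.

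The main obstacle is recovering the factor $\mf m^{(q-1)(n-1)}$ globally. My first attempt will use the free basis of $S$ over $S^q$ given by the monomials $x^c$ with $0\le c_j\le q-1$. Write $f=\sum_c x^c g_c^q$, with each $g_c\in I^{(m)}$ by the local computation. Then apply a general linear change of coordinates so that the point set avoids the coordinate subspaces. The aim is to show that the terms with $|c|<(q-1)(n-1)$ must be absorbed into $\mf m^{(q-1)(n-1)}(I^{(m)})^{[q]}$. I would argue this by a Fedder-type computation with $(I^{(m)})^{[q]}:I^{(N)}$, using that $X$ has dimension $0$, so that exactly $n$ of the $n+1$ directions are transverse at each point.

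If this direct approach fails, the fallback is weaker. Prove only the Frobenius containment above without the $\mf m$-factor, but apply it to $I^{(t)}$ in place of $I$ for large $t$, then optimize over $t$ and $q$ to recover the $n-1$ shift in the limit.
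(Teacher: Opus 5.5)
There is a genuine gap. Everything in characteristic $p$ rests on the containment $I_p^{(N)}\subseteq \mf m^{(q-1)(n-1)}(I_p^{(m)})^{[q]}$, and, as you acknowledge, you never prove it. It is much stronger than what is needed: it implies the Harbourne--Huneke-type containment $I_p^{(N)}\subseteq \mf m^{(q-1)(n-1)}(I_p^{(m)})^{q}$. Your sketch also contains no mechanism that produces the factor $\mf m^{(q-1)(n-1)}$.

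In the decomposition $f=\sum_c x^c g_c^q$, the only information you extract is $g_c\in I^{(m)}$, that is, $\deg f\ge q\,\alpha(I^{(m)})+|c|$ for each $c$ that occurs. Nothing forces some $c$ with $|c|\ge (q-1)(n-1)$ to occur. For example, for $f=g^q$ with $g\in I^{(m+n-1)}$ only $c=0$ appears, and the required bound then comes entirely from the \emph{extra} vanishing of the coefficient $g$, which your argument never detects. Note also that in $n+1$ homogeneous variables $|c|$ can reach $(n+1)(q-1)$, while the points have codimension $n$. This mismatch is why the paper dehomogenizes before doing any Frobenius bookkeeping.

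The fallback cannot succeed either. Containments without the $\mf m$-factor, for $I$ or for any $I^{(t)}$, give at best $\widehat\alpha(I)\ge \alpha(I^{(s)})/(s+n-1)$, with no $+(n-1)$ in the numerator. These bounds become tautological as $s\to\infty$. For a single point they never reach the value $\widehat\alpha=1$ that Demailly's bound asserts. That $+(n-1)$ is exactly the content of the conjecture.

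The missing idea is that you only need a degree bound for one well-chosen coefficient. After dehomogenizing to $n$ affine variables, write $f=\sum_{\lambda\in\Lambda_q}h_\lambda^q\mathbf x^\lambda$ and take $\gamma$ maximal in $\Supp(f)$. Since $\gamma\in\Lambda_q$, the product rule and $d_\alpha(h^q)=0$ for $0\ne\alpha\le\gamma$ give $d_\gamma(h_\lambda^q\mathbf x^\lambda)=h_\lambda^q\,d_\gamma(\mathbf x^\lambda)$. Maximality of $\gamma$ kills these terms for $\lambda\ne\gamma$. So $d_\gamma f=h_\gamma^q\in\mf m_i^{M-|\gamma|}$ for all $i$, whence $h_\gamma\in J^{(t)}$ with $t=\lceil (M-|\gamma|)/q\rceil\in[m,m+n-1]$.

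The paper then shows $\alpha(J^{(t)})\ge\alpha(J^{(m)})+t-m$ when $p>\alpha(J^{(m+n-1)})$. The reason is that a nonconstant minimal-degree element of degree $<p$ is not in $k[x_1^p,\dots,x_n^p]$, so some first-order Hasse derivative is nonzero and lands in $J^{(t-1)}$. Combining gives $\deg f\ge q\,\alpha(J^{(m)})+q(t-m)+|\gamma|\ge q\,\alpha(J^{(m)})+(q-1)(n-1)$. The trade-off between $|\gamma|$ and the extra vanishing $t-m$ of the coefficient is what produces the shift.

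A smaller point concerns the specialization step. You assert $\alpha(I_p^{(r)})\ge\alpha(I^{(r)})$, but your own justification (the kernel can only grow) gives $\alpha(I_p^{(r)})\le\alpha(I^{(r)})$ for every $r$. That inequality yields $\widehat\alpha(I_p)\le\widehat\alpha(I)$ directly. Your $\varepsilon$-argument does not depend on the misstatement. However, you must ensure that the threshold ``$p\gg0$'' in the characteristic-$p$ step is bounded by characteristic-zero data, as the paper's $p>\alpha(J^{(m)})+(n-1)\alpha(J)$ is; otherwise the choice of $p$ is circular.
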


The polynomial interpolation problem is classical and notoriously difficult. Even when the points are general, the numbers $\alpha(I^{(m)})$ are not known in general. The celebrated Alexander--Hirschowitz Theorem \cite{AH95} gives a complete solution for general double points in projective space. For higher multiplicities, many fundamental problems remain open. In the plane, these questions are closely related to the Segre--Harbourne--Gimigliano--Hirschowitz (SHGH) Conjecture, while their asymptotic behavior is related to Nagata's Conjecture and its higher-dimensional analogues; see, for instance, \cite{Nag59,Iar97,HM21} and the references therein. The study of these questions is also naturally connected to symbolic powers and their asymptotic invariants; see, for example, \cite{BH10,CHHVT20,DDSG+18,ELS01,BHH13,HH02}.

There has been substantial progress toward Chudnovsky's and Demailly's Conjectures. Chudnovsky himself established his bound for an arbitrary finite set of points in $\mbb{P}^2_{\mbb{C}}$ \cite{Chu81}, and Esnault and Viehweg \cite{EV83} proved Demailly's Conjecture for an arbitrary finite set of points in $\mbb{P}^2_{\mbb{C}}$. In higher dimensions, most previous results on these conjectures have required the points to be \emph{general} or \emph{very general}. 

For Demailly's Conjecture, Malara, Szemberg and Szpond \cite{MSS18} proved that, for a fixed $m$, the conjecture holds for sufficiently many very general points. Chang and Jow \cite{CJ20} improved these results and, in particular, proved the conjecture for a perfect $n$-th power number of very general points. Nagel and Trok \cite{NT19} established the conjecture for at most $n+2$ general points and for certain cases of $n+3$ general points. In \cite{BGHN22}, Bisui, Grifo, H\`a and Nguy$\tilde{\text{\^e}}$n proved Demailly's Conjecture for sufficiently many general points. Dumnicki, Szemberg and Szpond \cite{DSS24} subsequently proved the conjecture for at least $m^n$ very general points. More recently, Bisui and Nguy$\tilde{\text{\^e}}$n \cite{BN26} settled the case $m=2$ for very general points and obtained further results for general points, while Bisui and Mahato \cite{BM24} obtained improved bounds for general points and investigated the case $m=3$.

Chudnovsky's Conjecture was also established for various classes of very general and general points. For instance, Dumnicki \cite{Dum15}, Dumnicki and  Tutaj-Gasi\'nska \cite{DTG17} proved the conjecture for sufficiently many very general points. Fouli, Mantero and Xie removed the condition on the number of points and established the conjecture for very general points. Bisui, Grifo, H\`a and Nguy$\tilde{\text{\^e}}$n \cite{BGHN22b} proved the conjecture for sufficiently many general points. More recently, Bisui and Nguy$\tilde{\text{\^e}}$n \cite{BN24} completed the proof for general points. 

Special configurations, such as star configurations, have also played an important role in the development of the subject; see \cite{BH10,GHM13}. Nevertheless, before the present work, in dimension $n\geq 3$, both Chudnovsky's and Demailly's Conjectures remained open for an arbitrary finite set of points.

Our proof of Theorem~\ref{demailly} takes a different approach from previous work. Rather than imposing conditions on the number or the position of the points, our main idea is to first establish Demailly's inequality in sufficiently large positive characteristic, where we can make use of the interaction between differential operators and the Frobenius map. We then use specialization to descend to characteristic zero.

By choosing a hyperplane that contains none of the points and de-homogenization map, we can move the interpolation from projective space to affine space.
We then work over an algebraically closed field of characteristic $p\gg 0$. The differential operators $d_\lambda$ that we use are Hasse derivatives, and they allow us to control the order of vanishing of a polynomial at each of the points. On the other hand, if $q=p^e$, the Frobenius map gives a decomposition of a polynomial of the form
$$
f=\sum_\lambda h_\lambda^q\mathbf{x}^{\lambda}.
$$
The key observation is that, by applying a suitable differential operator to $f$, one can isolate one of the coefficients $h_\lambda^q$ while controlling its order of vanishing at all of the points.

This leads to the main estimate in positive characteristic. Fix $m\geq 1$ and let
$$
M=q(m+n-1)-(n-1),
$$
where $q=p^e$. We prove in Theorem \ref{mainineq} that, when $p$ is sufficiently large,
$$
\alpha(J^{(M)})\geq q\alpha(J^{(m)})+(q-1)(n-1).
$$
Dividing by $M$ and letting $q\rightarrow\infty$ gives
$$
\widehat{\alpha}(J)\geq \frac{\alpha(J^{(m)})+n-1}{m+n-1}.
$$
It is worth noting that the finite inequality above is stronger than the asymptotic statement needed for Demailly's Conjecture. When $m=1$, Theorem \ref{mainineq} also establishes the positive characteristic analogue of a conjecture of Harbourne and Huneke, stated in \cite[Conjecture 4.8]{BHH13}, for a Frobenius subsequence $r = p^e$.

The last step of the proof is to specialize from characteristic zero to positive characteristic while keeping track of the interpolation numbers. Starting from the coordinates of the given points, we construct a finitely generated $\mbb{Z}$-algebra and choose a suitable fiber in arbitrarily large positive characteristic. The specialization is constructed so that the points remain distinct and, for the fixed value $m$ and every $r \geq 1$,
$$
\alpha(I(X)^{(m)})=\alpha(I(Y)^{(m)}) \text{ and }
\alpha(I(X)^{(r)})\geq \alpha(I(Y)^{(r)}).
$$
It follows that
$$
\widehat{\alpha}(I(X))\geq\widehat{\alpha}(I(Y)).
$$
Applying our positive characteristic result to $Y$ then gives Theorem~\ref{demailly}. An important feature of this specialization argument is that no genericity assumption on the points is needed.

The paper is outlined as follows. In Section~2, we pass from projective space to affine space and show that de-homogenization preserves the initial degrees of symbolic powers. In Section~3, we introduce the differential operators and Frobenius decomposition that will be used in the proof, and establish Demailly's Conjecture for finite sets of affine points over algebraically closed fields of sufficiently large positive characteristic. In Section~4, we construct the specialization from characteristic zero to positive characteristic and complete the proof of Theorem~\ref{demailly}.


\subsection*{Role of AI in this work}

ChatGPT-5.6 Sol was used in this project. In particular, it suggested a positive-characteristic approach to the problem, first proving the desired inequality for affine points in this setting and then passing to characteristic zero by specialization. It also outlined proofs of key theorems, including Theorem \ref{mainineq}, using Hasse derivatives in the setting of Chudnovsky's conjecture, i.e., when $m = 1$. These suggestions played an important role in the approach taken in this paper.

The mathematical arguments and proofs presented here were subsequently developed, written, and independently verified by the authors. The authors take full responsibility for the correctness of all results and arguments in the paper.


\subsection*{Acknowledgment.} The use of ChatGPT is indispensable in this work. The first author is partially supported by a Simons Foundation grant.


\section{Projective to affine reduction}
Throughout this section, let $S = k[x_0,\dots, x_n]$ be the polynomial ring in $n+1$ variables over an algebraically closed field $k$. Let $\mathfrak{m}$ denote the homogeneous maximal ideal in $S$. We recall the following.



Our first step to proving Theorem \ref{demailly} is to move to ``equivalent'' points in affine space. Let $X = \{P_1,\dots, P_r\}$ be a finite set of distinct points in $\mbb{P}^n$ with $P_i = [p_{i0}:\cdots:p_{in}]$ and $I(X) = \cap_{i=1}^r I(P_i)$.




Without loss of generality, via an appropriate change of coordinates, we may assume that none of the points in $X$ lies on the hyperplane $x_0 = 0$. That is, $X = \{P_1,\dots P_s\}$ with $p_{i0}=1$ for all $1\leq i\leq s$. Let $R = k[y_1,\dots, y_n]$ and $\mf{m}_i = (y_1-p_{i1},\dots, y_n-p_{in})$; in other words, the de-homogenization of $I(P_i)$ with respect to $x_0$.

\begin{theorem}\label{affinepoints}
Consider the ideal $J = \cap_{i=1}^s \mf{m}_i\subset R$. Then,
\[\alpha(I(X)^{(r)}) = \alpha(J^{(r)}).\]
for all $r\geq 1$.
    
\end{theorem}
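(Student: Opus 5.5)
The plan is to compare the two initial degrees by moving polynomials back and forth between $S$ and $R$. In one direction we de-homogenize at $x_0=1$; in the other we homogenize with respect to $x_0$. The key input is that membership in a symbolic power of a point is an order-of-vanishing condition, and that this condition is local: it is unchanged when we pass to the affine chart $x_0\neq 0$.

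\emph{Description of the symbolic powers.} Since $k$ is algebraically closed (hence perfect), the symbolic powers are intersections of powers of the point ideals:
\[
I(X)^{(r)}=\bigcap_i I(P_i)^r \quad\text{and}\quad J^{(r)}=\bigcap_i \mf m_i^r .
\]
This follows either from Zariski--Nagata or directly, because each $I(P_i)$ is generated by linear forms, so $I(P_i)^r$ is primary. Let $F\in S$ be homogeneous and set $f=F(1,y_1,\dots,y_n)$. The local ring of $\mbb P^n$ at $P_i$ is the localization of $R$ at $\mf m_i$, with $I(P_i)$ corresponding to $\mf m_i$. This gives the first key claim:
\[
F\in I(P_i)^r \iff f\in \mf m_i^r .
\]
To prove it concretely, after the linear change of coordinates $x_j\mapsto x_j-p_{ij}x_0$, we have $I(P_i)=(x_1,\dots,x_n)$. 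A homogeneous $F$ lies in $(x_1,\dots,x_n)^r$ exactly when every monomial of $F$ has total degree at least $r$ in $x_1,\dots,x_n$. Setting $x_0=1$ gives the same condition for $f$ relative to $(y_1,\dots,y_n)$, and the correspondence between monomials of $F$ and of $f$ is bijective because $F$ is homogeneous.

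\emph{The inequality $\alpha(J^{(r)})\le \alpha(I(X)^{(r)})$.} Take a nonzero homogeneous $F\in I(X)^{(r)}$ of degree $d=\alpha(I(X)^{(r)})$. We may assume $x_0\nmid F$. Otherwise write $F=x_0G$; then $G$ still lies in every $I(P_i)^r$, because $x_0\notin I(P_i)$ and $I(P_i)^r$ is primary, and $G$ has smaller degree, contradicting minimality. Now $f=F(1,y)$ is nonzero, has degree at most $d$, and lies in $J^{(r)}$ by the key claim. Hence $\alpha(J^{(r)})\le d$.

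\emph{The inequality $\alpha(I(X)^{(r)})\le \alpha(J^{(r)})$.} Take a nonzero $f\in J^{(r)}$ of minimal degree $e$, which I am reading as $\alpha(J^{(r)})$ for a non-homogeneous ideal. Let $F=x_0^{e}f(x_1/x_0,\dots,x_n/x_0)$ be its homogenization. Then $F$ is homogeneous of degree $e$ and satisfies $F(1,y)=f$. By the key claim, $F\in\bigcap_i I(P_i)^r=I(X)^{(r)}$, so $\alpha(I(X)^{(r)})\le e$.

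\emph{Main obstacle.} The only delicate step is the key claim itself, specifically the identification of symbolic powers with ordinary powers of the point ideals and the bookkeeping under the coordinate change. The rest is formal.
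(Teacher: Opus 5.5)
Your proof is correct and takes essentially the paper's route. You dehomogenize a minimal-degree form to get $\alpha(J^{(r)})\le\alpha(I(X)^{(r)})$, and you homogenize a minimal-degree element of $J^{(r)}$ for the reverse inequality; your key claim (membership in $I(P_i)^r$ versus $\mf{m}_i^r$ read off from monomials in coordinates centered at $P_i$) neatly packages the paper's explicit $\sum c_{i\lambda}z_i^\lambda$ expansions. The reduction to $x_0$ not dividing $F$ is harmless but unnecessary, since dehomogenization is already injective on forms of a fixed degree.
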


\begin{proof}
    Let $z_{ij} = x_j-p_{ij}x_0$ and $w_{ij} = y_j-p_{ij}$. For each $d\geq 1$, de-homogenization with respect to $x_0$ gives an isomorphism of vector spaces
    $$\phi_d:S_d\rightarrow R_{\leq d}$$
    $$F\rightarrow F(1,y_1,\dots,y_n).$$
    Furthermore, if $f\in S_e$ and $g\in S_{d-e}$ then $\phi_d(fg) = \phi_e(f)\phi_{d-e}(g)$. Consider a form $F\in I(X)^{(r)}$ of degree $d$. Since $I(X)$ is an ideal of points, $I(X)^{(r)} = \cap_{i=1}^sI(P_i)^r$. Therefore, for each $i$, $F = \sum_{|\lambda|=r} c_{i\lambda}z_i^{\lambda}$, where $c_{i\lambda}\in S_{d-r}$ and $\lambda\in\mbb{N}^n$. It follows that
    $$\phi_d(F) = \sum_{|\lambda|=r} \phi_{d-r}(c_{i\lambda})\phi_r(z_i^{\lambda})
    = \sum_{|\lambda|=r} \phi_{d-r}(c_{i\lambda})\prod_{j=1}^n\phi_1(z_{ij})^{\lambda_j}.$$
    Since $\phi_1(z_{ij}) = w_{ij}$, we have
    $$\phi_d(F) = \sum_{|\lambda|=r} \phi_{d-r}(c_{i\lambda})w_i^{\lambda}\in \mf{m}_i^r$$
    for all $i$. Hence, $\phi_d(F)\in J^{(r)}$. Since $\deg(\phi_d(F))\leq \deg(F)$, we have
    $$\alpha(I(X)^{(r)})\geq \alpha(J^{(r)}).$$

    Now we use the homogenization operation
    $$\psi_d:R_{\leq d}\rightarrow S_d \text{ defined by } f \rightarrow x_0^df\left(\frac{x_1}{x_0},\dots,\frac{x_n}{x_0}\right).$$
    Observe that this is $\phi_d^{-1}$ and has the property that if $\deg(f)=r$ and $\deg(g)=t-r$ for $t\leq d$, then
    $$\psi_d(fg)=x_0^{d-t}\psi_r(f)\psi_{t-r}(g).$$

    Choose $0\neq f\in J^{(r)}$ such that
    $\deg(f)=\alpha(J^{(r)})=:t.$
    Fix $i$. Since $f\in\mf{m}_i^r$, writing $f$ in the coordinates
    $w_{i1},\dots,w_{in}$ gives
    $$f=\sum_{\substack{\nu\in\mbb{N}^n\\ r\leq|\nu|\leq t}}
    a_{i\nu}w_i^\nu.$$
    For each $\nu$ occurring in this sum, choose
    $\lambda\leq\nu$ coordinatewise with $|\lambda|=r$. Then
    $w_i^\nu=w_i^\lambda w_i^{\nu-\lambda}$ and
    $|\nu-\lambda|\leq t-r$. Grouping the terms according to
    $\lambda$, we obtain
    $$f=\sum_{|\lambda|=r}c_{i\lambda}w_i^\lambda \text{ with } \deg(c_{i\lambda})\leq t-r.$$

    Since $\psi_t$ is linear,
    $$\psi_t(f)=\sum_{|\lambda|=r}\psi_t(c_{i\lambda}w_i^\lambda).$$
    If $\deg(c_{i\lambda})=a_\lambda$, then $a_\lambda+r\leq t$, and hence
    $$\psi_t(f)
    =\sum_{|\lambda|=r}
    x_0^{t-a_\lambda-r}\psi_{a_\lambda}(c_{i\lambda})z_i^\lambda.$$
    Therefore, $\psi_t(f)\in I(P_i)^r$ for every $i$, and thus
    $\psi_t(f)\in I(X)^{(r)}$. Since $\psi_t(f)\neq0$ and
    $\deg\psi_t(f)=t$, we obtain
    $$\alpha(I(X)^{(r)})\leq t=\alpha(J^{(r)}).$$
    We obtain the desired equality.
\end{proof}

Since the initial degrees are the same, it suffices to prove Demailly's conjecture in the affine setting introduced above.

\section{Positive characteristic and Frobenius estimate}

We shall now prove Demailly's conjecture in the affine setting for polynomial rings over algebraically closed fields of characteristic $p \gg 0$. 

By defining 
$$\alpha(J) = \min \{\deg f \mid 0 \not= f \in J\}$$
for any ideal $J \subseteq R = k[x_1, \dots, x_n]$, we can also discuss $\alpha(J^{(t)})$ and the Waldschmidt constant of $J$ as those of $I \subseteq S = k[x_0, \dots, x_n]$ as in the introduction.

For polynomial rings over a field of any characteristic, we shall make use of Hasse derivatives.

\begin{definition}\label{diffmon}
Let $\beta,\lambda\in\NN^n$ and let $R=k[x_1,\dots,x_n]$. The
$\lambda$-th Hasse derivative is the $k$-linear operator
$d_\lambda:R\longrightarrow R$ defined on monomials by
$$
d_\lambda(\mathbf{x}^\beta)
=
\begin{cases}
\displaystyle
\binom{\beta_1}{\lambda_1}\cdots
\binom{\beta_n}{\lambda_n}\mathbf{x}^{\beta-\lambda},
& \text{if $\beta\geq\lambda$ coordinatewise},\\
0, & \text{otherwise}.
\end{cases}
$$
\end{definition}

We shall use the following standard properties of Hasse derivatives.
They satisfy the product formula
$$
d_\lambda(fg)
=
\sum_{\alpha+\beta=\lambda}
d_\alpha(f)d_\beta(g).
$$
They also satisfy the Hasse--Taylor formula. For
$\mathbf{a}=(\mathbf{a}_1,\dots,\mathbf{a}_n)\in R^n$, let
$\tau_{\mathbf{a}}:R\longrightarrow R$ denote the substitution map
$x_i\mapsto\mathbf{a}_i$. Then, for $\mathbf{y},\mathbf{h}\in R^n$,
$$
\tau_{\mathbf{y}+\mathbf{h}}(f)
=
\sum_{\lambda\in\NN^n}
\tau_{\mathbf{y}}(d_\lambda(f))\mathbf{h}^\lambda.
$$
In particular, if $a\in k^n$, taking $\mathbf{y}=a$ and
$\mathbf{h}=x-a$ gives
$$
f
=
\sum_{\lambda\in\NN^n}
\tau_a(d_\lambda(f))(x-a)^\lambda.
$$

\begin{lemma}\label{diffdecrease}
    Let $\mf{m}=(x_1-a_1,\dots,x_n-a_n) \subset R = k[x_1,\dots x_n]$, for some $a = (a_1, \dots, a_n) \in k^n$. Then,
    \[d_\lambda(\mf{m}^s)\subseteq \mf{m}^{\max\{s-|\lambda|,0\}} \]
    Furthermore,
    \[f\in \mf{m}^s \iff  d_\lambda(f)\in \mf{m} \text{ for all $|\lambda|< s $}\]
\end{lemma}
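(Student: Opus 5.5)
Both assertions follow from the Hasse--Taylor expansion at $a$, once we note that $d_\lambda$ commutes with the translation $x\mapsto x+a$. Concretely, let $\tau$ be the $k$-algebra automorphism $x_i\mapsto x_i+a_i$, and set $w_i=x_i-a_i$, so that $\tau(w_i)=x_i$. Applying the Hasse--Taylor formula with $\mathbf{y}=x$ and $\mathbf{h}=a$ (constants) gives $\tau(f)=\sum_\mu \tau_x(d_\mu f)\,a^\mu$. One then checks, say on monomials via the product formula, that $d_\lambda\circ\tau=\tau\circ d_\lambda$. Hence it is enough to compute $d_\lambda$ in the coordinates $w$, where $\mf m=(w_1,\dots,w_n)$ and $d_\lambda$ acts on $w$-monomials by the same binomial rule as in Definition~\ref{diffmon}.

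\emph{First assertion.} The ideal $\mf m^s$ is spanned by products $g\,w^\nu$ with $|\nu|=s$ and $g\in R$. By the product formula,
\[
d_\lambda(g w^\nu)=\sum_{\alpha+\beta=\lambda} d_\alpha(g)\,d_\beta(w^\nu).
\]
Each $d_\beta(w^\nu)$ is a scalar multiple of $w^{\nu-\beta}$, or is $0$. Since $|\nu-\beta|\ge s-|\lambda|$, every term lies in $\mf m^{\max\{s-|\lambda|,0\}}$. Linearity then gives the inclusion for all of $\mf m^s$.

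\emph{Second assertion.} For ($\Rightarrow$): if $f\in\mf m^s$ and $|\lambda|<s$, the first part gives $d_\lambda(f)\in\mf m^{s-|\lambda|}\subseteq\mf m$. For ($\Leftarrow$): use the Taylor expansion
\[
f=\sum_\lambda \tau_a(d_\lambda f)\,(x-a)^\lambda .
\]
Here $\tau_a(d_\lambda f)$ is the value of $d_\lambda f$ at the point $a$. It vanishes exactly when $d_\lambda f\in\mf m$, because $\mf m$ is the maximal ideal of $a$. So under the hypothesis, every term with $|\lambda|<s$ drops out, and $f$ is a $k$-linear combination of $(x-a)^\lambda$ with $|\lambda|\ge s$, which lies in $\mf m^s$.

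The only step needing care is the commutation of $d_\lambda$ with translation, or equivalently that $d_\lambda$ acts by the binomial rule on powers of $x-a$. I would verify this directly: write $d_\lambda$ as the product of one-variable operators $d_{\lambda_i}$ in $x_i$. Then check $d_j\big((x_i-a_i)^b\big)=\binom{b}{j}(x_i-a_i)^{b-j}$ by expanding with the binomial theorem and using $\binom{b}{c}\binom{c}{j}=\binom{b}{j}\binom{b-j}{c-j}$. This identity holds over $\mathbb{Z}$, so it holds in any characteristic.
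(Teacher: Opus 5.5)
Your proof is correct and follows the paper's argument. The binomial rule $d_\lambda((x-a)^\beta)=\binom{\beta}{\lambda}(x-a)^{\beta-\lambda}$ gives the first inclusion, and the Taylor expansion at $a$ gives the reverse direction of the second assertion; the only differences are cosmetic. You verify the binomial rule explicitly, which the paper simply asserts, and you apply the product formula to $g\,(x-a)^\nu$, whereas the paper uses that $\mf m^s$ is $k$-spanned by the $(x-a)^\beta$ with $|\beta|\ge s$.
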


\begin{proof}
For $\beta\in\NN^n$, we have
$$
d_\lambda((x-a)^\beta)
=
\binom{\beta}{\lambda}(x-a)^{\beta-\lambda}
$$
if $\beta\geq\lambda$ coordinatewise, and
$d_\lambda((x-a)^\beta)=0$ otherwise. Hence, if $|\beta|\geq s$ and
$d_\lambda((x-a)^\beta)\neq0$, then $|\beta-\lambda|=|\beta|-|\lambda|\geq s-|\lambda|.$
Therefore,
$$
d_\lambda((x-a)^\beta)
\in
\mf{m}^{\max\{s-|\lambda|,0\}}.
$$
Since $\mf{m}^s$ is generated by the monomials $(x-a)^\beta$ with
$|\beta|\geq s$, this proves the first assertion.

For the second assertion, consider $y=a$ and $h=x-a$. As seen before, ${\displaystyle f=\sum_{\lambda\in\NN^n} \tau_a(d_\lambda(f))(x-a)^\lambda.}$ 
If $f\in\mf{m}^s$, then by the first assertion
$d_\lambda(f)\in\mf{m}$ for every $|\lambda|<s$.

Conversely, suppose that $d_\lambda(f)\in\mf{m}$ for every
$|\lambda|<s$. Since $\mf{m}=\ker\tau_a$, we have
$\tau_a(d_\lambda(f))=0$ for every $|\lambda|<s$. Thus, in the
Taylor expansion above, every term of total degree less than $s$
vanishes. Hence, $f\in\mf{m}^s$.
\end{proof}

\begin{lemma}\label{deripower}
    Let $q=p^e$ and $h\in R$. Then,
    \[d_\lambda(h^q) = 0 \text{ unless } \lambda \in q\mbb{N}^n. \]
    Furthermore, if $\lambda = q\mu$ with $\mu \in \mbb{N}^n$, then we have $d_\lambda(h^q) = (d_\mu(h))^q$
\end{lemma}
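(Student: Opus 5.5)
Here $k$ has characteristic $p$. The plan is to use the Hasse--Taylor formula with a formal increment and compare coefficients. Introduce new variables $\mathbf{t}=(t_1,\dots,t_n)$ and work in $R[\mathbf{t}]=k[x_1,\dots,x_n,t_1,\dots,t_n]$. The Hasse--Taylor formula holds in any polynomial ring over $k$, since it is an identity of polynomials coming from the binomial theorem on monomials. Extended to $R[\mathbf{t}]$ with $\mathbf{y}=\mathbf{x}$ and $\mathbf{h}=\mathbf{t}$, it says that $d_\lambda(f)$ is exactly the coefficient of $\mathbf{t}^\lambda$ in
$$f(\mathbf{x}+\mathbf{t})=\sum_{\lambda}d_\lambda(f)\,\mathbf{t}^\lambda .$$
Since this expansion is unique, it determines all the $d_\lambda(f)$. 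If one prefers to avoid the general statement, the same identity is checked on monomials via $(x_i+t_i)^{\beta_i}=\sum_j\binom{\beta_i}{j}x_i^{\beta_i-j}t_i^j$ and then extended by linearity.

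Next apply this to $h^q$. Substitution is a ring homomorphism, and in characteristic $p$ the Frobenius $u\mapsto u^q$ is a ring endomorphism of $R[\mathbf{t}]$. Therefore
$$h^q(\mathbf{x}+\mathbf{t})=\bigl(h(\mathbf{x}+\mathbf{t})\bigr)^q=\Bigl(\sum_\mu d_\mu(h)\,\mathbf{t}^\mu\Bigr)^q=\sum_\mu \bigl(d_\mu(h)\bigr)^q\,\mathbf{t}^{q\mu}.$$
Two facts are used in the last equality: the $q$-th power of a sum is the sum of the $q$-th powers, and $(c\,\mathbf{t}^\mu)^q=c^q\mathbf{t}^{q\mu}$.

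Finally, compare the coefficients of $\mathbf{t}^\lambda$ in the two expansions of $h^q(\mathbf{x}+\mathbf{t})$. If $\lambda\notin q\mathbb{N}^n$, no term $\mathbf{t}^{q\mu}$ equals $\mathbf{t}^\lambda$, so $d_\lambda(h^q)=0$. If $\lambda=q\mu$, the map $\mu\mapsto q\mu$ is injective, so the coefficient is exactly $(d_\mu(h))^q$. These are the two assertions of the lemma.

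There is no real obstacle. The only point needing care is to justify that $d_\lambda(f)$ is the $\mathbf{t}^\lambda$-coefficient of $f(\mathbf{x}+\mathbf{t})$. This uses the Hasse--Taylor formula stated above, applied in the larger polynomial ring (or the direct monomial check), together with linear independence of the monomials $\mathbf{t}^\lambda$ over $R$.
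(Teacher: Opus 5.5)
Your proof is correct. It uses the same ingredients as the paper: the Hasse--Taylor expansion, additivity of the $q$-th power map, and comparison of coefficients.

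The difference is that the paper expands around each point rather than generically. For each $a\in k^n$ it writes $\tau_{\mathbf{x}+a}(h^q)=\bigl(\sum_\lambda\tau_a(d_\lambda(h))\mathbf{x}^\lambda\bigr)^q=\sum_\lambda\tau_a(d_\lambda(h)^q)\mathbf{x}^{q\lambda}$. Comparing this with $\sum_\lambda\tau_a(d_\lambda(h^q))\mathbf{x}^\lambda$ gives the two identities after evaluation at $a$. The paper then lifts them to identities in $R$ by noting that a polynomial vanishing at every point of $k^n$ is zero, phrased there as $\mathfrak{J}(R)=0$. That lifting step needs $k$ infinite. This holds in the paper's setting, since $k$ is algebraically closed, but it would fail over $\mathbb{F}_p$, where $x^p-x$ vanishes everywhere.

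Your generic increment $\mathbf{t}$ lets the coefficient comparison take place directly in the free $R$-module $R[\mathbf{t}]$. So you get the identities in $R$ at once, with no evaluation step, and the argument works over any field (indeed any ring) of characteristic $p$. The only price is that you need the Taylor formula with an increment outside $R$, which your binomial check on monomials supplies. The paper's version stays inside $R$, using the same specialization $\mathbf{y}=a$, $\mathbf{h}=\mathbf{x}$ it already uses in Lemma~\ref{diffdecrease}.
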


\begin{proof}
    For any $a\in k^n\subset R^n$, consider,
    \[\tau_{\mathbf{x}+a}(h^q) = \big(\sum_{\lambda\in \mbb{N}^n}\tau_a(d_\lambda(h))\mathbf{x}^\lambda\big)^q = \sum_{\lambda\in \mbb{N}^n}\tau_a(d_\lambda(h)^q)\mathbf{x}^{q\lambda}.\]
    By uniqueness of multigraded components, this decomposition is unique. Since $\mf{J}(R) = 0$, $\tau_a(f)=0$ for all $a\in k^n$ if and only if $f=0$. Hence, the proposition follows.
\end{proof}

We recall some basic definitions and notation for rings of characteristic $p$.

\begin{definition}[Frobenius map]\label{frobenius}
Let $A$ be a ring of characteristic $p>0$ and let $q=p^e$. The
$e$-th iterated Frobenius map is the ring homomorphism
$$
F^e:A\longrightarrow A,\qquad r\longmapsto r^q.
$$
We denote by $F_*^e(A)$ the $A$-module obtained from $A$ by restriction
of scalars along $F^e$. Thus, as an abelian group $F_*^e(A)=A$, and
the $A$-module structure is given by
$$
a\cdot F_*^e(r)=F_*^e(a^q r)
$$
for $a,r\in A$.
\end{definition}

The following folklore result on the structure of the polynomial ring over a field of characteristic $p$ is helpful.

\begin{lemma}\label{polybasis}
Let $k$ be a perfect field of characteristic $p>0$, let $q=p^e$, and
let $R=k[x_1,\dots,x_n]$. Set $\Lambda_q=\{0,\dots,q-1\}^n.$
Then $F_*^e(R)$ is a free $R$-module with basis
$$
\left\{F_*^e(\mathbf{x}^{\lambda})\mid
\lambda\in\Lambda_q\right\}.
$$
Equivalently, every $f\in R$ can be written uniquely in the form
$$f=\sum_{\lambda\in\Lambda_q}h_\lambda^q\mathbf{x}^{\lambda}, \text{ where } h_\lambda\in R.$$
\end{lemma}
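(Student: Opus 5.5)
We need to prove Lemma \ref{polybasis}: $F^e_*(R)$ is free with basis $\{F^e_*(\mathbf{x}^\lambda) : \lambda\in\Lambda_q\}$, or equivalently that every $f$ has a unique expansion $f=\sum_{\lambda\in\Lambda_q} h_\lambda^q\mathbf{x}^\lambda$. The plan is to reduce both existence and uniqueness to bookkeeping with monomials, using perfectness of $k$ to take $q$-th roots of coefficients and division with remainder on exponents.

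\textbf{Existence.} By $k$-linearity (more precisely, additivity, since the $h_\lambda$ get added) it suffices to treat a single term $c\,\mathbf{x}^\beta$ with $c\in k$ and $\beta\in\NN^n$.
\begin{enumerate}
\item Write $\beta=q\gamma+\lambda$ with $\lambda\in\Lambda_q$, coordinatewise by Euclidean division: $\beta_i=q\gamma_i+\lambda_i$ with $0\le\lambda_i<q$.
\item Since $k$ is perfect, $c=b^q$ for a unique $b\in k$.
\item Then $c\,\mathbf{x}^\beta=(b\,\mathbf{x}^\gamma)^q\,\mathbf{x}^\lambda$.
\item Summing over the terms of $f$ and using $(u+v)^q=u^q+v^q$ in characteristic $p$, the contributions with the same $\lambda$ combine into a single $h_\lambda^q$.
\end{enumerate}
This shows the $F^e_*(\mathbf{x}^\lambda)$ generate $F^e_*(R)$ as an $R$-module, because $a\cdot F^e_*(\mathbf{x}^\lambda)=F^e_*(a^q\mathbf{x}^\lambda)$.

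\textbf{Uniqueness.} This amounts to linear independence: suppose $\sum_{\lambda\in\Lambda_q}h_\lambda^q\mathbf{x}^\lambda=0$.
\begin{enumerate}
\item Frobenius on $R$ acts on polynomials by raising each coefficient to the $q$-th power and multiplying exponents by $q$. So every monomial occurring in $h_\lambda^q$ has exponent in $q\NN^n$, and if $h_\lambda\ne0$ its coefficients stay nonzero, since Frobenius on a field is injective.
\item Hence every monomial appearing in $h_\lambda^q\mathbf{x}^\lambda$ has exponent congruent to $\lambda$ modulo $q$ coordinatewise.
\item Since distinct $\lambda\in\Lambda_q$ give distinct residue classes in $(\mathbb{Z}/q)^n$, the supports of the summands are pairwise disjoint.
\item So each summand must vanish separately, giving $h_\lambda^q=0$, and $h_\lambda=0$ because $R$ is reduced.
\end{enumerate}

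\textbf{Main obstacle.} There is no real obstacle; the only point needing care is the disjointness of supports modulo $q$, and perfectness is used exactly once, to extract $q$-th roots of the coefficients.
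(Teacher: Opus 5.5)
Your proposal is correct and follows essentially the same route as the paper: Euclidean division $\beta = q\mu + \lambda$ plus perfectness of $k$ to take $q$-th roots of coefficients gives generation, and the fact that the monomials of $h_\lambda^q\mathbf{x}^\lambda$ have exponents in distinct residue classes modulo $q$ gives linear independence. Your explicit remarks on additivity of Frobenius when combining terms, and on $h_\lambda^q=0 \Rightarrow h_\lambda=0$ because $R$ is reduced, fill in small steps the paper leaves implicit.
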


\begin{proof}
Every $\beta\in\mbb{N}^n$ can be written uniquely as $\beta=q\mu+\lambda$
with $\mu\in\mbb{N}^n$ and $\lambda\in\Lambda_q$. Since $k$ is
perfect, every $c\in k$ can be written uniquely as $c=a^q$ for some
$a\in k$. Hence,
$$
F_*^e(c\mathbf{x}^{\beta})
=
F_*^e(a^q\mathbf{x}^{q\mu+\lambda})
=
a\mathbf{x}^{\mu}\cdot F_*^e(\mathbf{x}^{\lambda}).
$$
It follows that the elements
$F_*^e(\mathbf{x}^{\lambda})$, $\lambda\in\Lambda_q$, generate
$F_*^e(R)$ as an $R$-module.

To see that they are linearly independent, suppose that
$$
\sum_{\lambda\in\Lambda_q}
h_\lambda\cdot F_*^e(\mathbf{x}^{\lambda})=0.
$$
By the definition of the $R$-module structure on $F_*^e(R)$, this is
equivalent to
$$
F_*^e\left(
\sum_{\lambda\in\Lambda_q}
h_\lambda^q\mathbf{x}^{\lambda}\right)=0,
$$
and hence
$$
\sum_{\lambda\in\Lambda_q}
h_\lambda^q\mathbf{x}^{\lambda}=0.
$$
For distinct $\lambda\in\Lambda_q$, the monomials occurring in
$h_\lambda^q\mathbf{x}^{\lambda}$ have exponent vectors belonging to
distinct residue classes modulo $q$. Thus, the supports of these
summands are pairwise disjoint, and it follows that $h_\lambda=0$
for every $\lambda$. This proves that the displayed set is a basis.

The equivalent polynomial decomposition follows immediately from
the $R$-module decomposition of $F_*^e(R)$.
\end{proof}

For a polynomial
$f=\sum_{\lambda\in\Lambda_q}h_\lambda^q\mathbf{x}^\lambda$
as in Lemma~\ref{polybasis}, set
$$
\Supp(f)=\{\lambda\in\Lambda_q\mid h_\lambda\neq 0\}.
$$

\begin{lemma}\label{diffcoeff}
Let
$$
f=\sum_{\lambda\in\Lambda_q}h_\lambda^q\mathbf{x}^\lambda\in R
$$
be the decomposition of $f$ given by Lemma~\ref{polybasis}. If
$\gamma$ is a maximal element of $\Supp(f)$ with respect to the
usual partial ordering on $\mbb{N}^n$, then
$$
d_\gamma(f)=h_\gamma^q.
$$
\end{lemma}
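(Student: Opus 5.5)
The plan is to compute $d_\gamma(f)$ term by term. By $k$-linearity,
$$
d_\gamma(f)=\sum_{\lambda\in\Supp(f)} d_\gamma(h_\lambda^q\mathbf{x}^\lambda),
$$
and each summand can be expanded with the product formula:
$$
d_\gamma(h_\lambda^q\mathbf{x}^\lambda)=\sum_{\alpha+\beta=\gamma} d_\alpha(h_\lambda^q)\,d_\beta(\mathbf{x}^\lambda).
$$
Lemma~\ref{deripower} says $d_\alpha(h_\lambda^q)=0$ unless $\alpha\in q\mbb{N}^n$. Since $\alpha\leq\gamma$ coordinatewise and $\gamma\in\Lambda_q$, every coordinate of $\alpha$ is at most $q-1$. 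The only multiple of $q$ in that range is $0$, so the only surviving term is $\alpha=0$, $\beta=\gamma$. Because $d_0$ is the identity, this gives
$$
d_\gamma(h_\lambda^q\mathbf{x}^\lambda)=h_\lambda^q\,d_\gamma(\mathbf{x}^\lambda).
$$

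Next I evaluate $d_\gamma(\mathbf{x}^\lambda)$ using Definition~\ref{diffmon}. It vanishes unless $\lambda\geq\gamma$ coordinatewise. For $\lambda\in\Supp(f)$, the maximality of $\gamma$ in $\Supp(f)$ forces $\lambda=\gamma$ whenever $\lambda\geq\gamma$. So every term with $\lambda\neq\gamma$ drops out. For $\lambda=\gamma$ we get $d_\gamma(\mathbf{x}^\gamma)=\prod_i\binom{\gamma_i}{\gamma_i}\mathbf{x}^0=1$. Therefore $d_\gamma(f)=h_\gamma^q$.

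The one step needing care is the use of Lemma~\ref{deripower} to kill the cross terms. It relies on every coordinate of $\gamma$ being strictly less than $q$, so that no nonzero $\alpha\leq\gamma$ lies in $q\mbb{N}^n$. This holds because $\gamma\in\Lambda_q$. Everything else is routine bookkeeping. Throughout I use that $k$ is perfect, as in Lemma~\ref{polybasis}, so that the decomposition and $\Supp(f)$ are well defined.
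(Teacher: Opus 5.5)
Your proof is correct and follows essentially the same route as the paper: the product formula together with Lemma~\ref{deripower} and $\gamma\in\Lambda_q$ reduces each summand to $h_\lambda^q\,d_\gamma(\mathbf{x}^\lambda)$. Maximality of $\gamma$ in $\Supp(f)$ then leaves only the term $\lambda=\gamma$, where $d_\gamma(\mathbf{x}^\gamma)=1$.
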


\begin{proof}
Fix $\lambda\in\Supp(f)$. By the product formula,
$$
d_\gamma(h_\lambda^q\mathbf{x}^\lambda)
=
\sum_{\alpha+\beta=\gamma}
d_\alpha(h_\lambda^q)d_\beta(\mathbf{x}^\lambda).
$$
By Lemma \ref{deripower}, 
$d_\alpha(h_\lambda^q)$ can be nonzero only if
$\alpha\in q\mbb{N}^n$. On the other hand, $\alpha\leq\gamma$
coordinatewise, and $\gamma\in\Lambda_q$, so
$0\leq\gamma_i<q$ for every $i$. It follows that $\alpha=0$.
Therefore,
$$
d_\gamma(h_\lambda^q\mathbf{x}^\lambda)
=
h_\lambda^q d_\gamma(\mathbf{x}^\lambda).
$$

By Definition~\ref{diffmon}, $d_\gamma(\mathbf{x}^\lambda)=0$
unless $\lambda\geq\gamma$ coordinatewise. Since $\gamma$ is
maximal in $\Supp(f)$, the only $\lambda\in\Supp(f)$ satisfying
$\lambda\geq\gamma$ is $\lambda=\gamma$. Moreover,
$d_\gamma(\mathbf{x}^\gamma)=1$. Hence,
$$
d_\gamma(f)=h_\gamma^q,
$$
as desired.
\end{proof}

We now return to the setting of ideals generated by points. Let $J$ be as defined in the prior section (as in Theorem \ref{affinepoints}), i.e., $J$ is the defining ideal of a finite set of distinct points in $\mathbb{A}^n_k$.

\begin{lemma}\label{tbound}
Fix $m\geq 1$. Assume that $p>\alpha(J^{(m+n-1)}).$
Then, for every $t$ with $m\leq t\leq m+n-1$,
$$
\alpha(J^{(t)})
\geq
\alpha(J^{(m)})+t-m.
$$
\end{lemma}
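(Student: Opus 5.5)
We prove the inequality by showing that a single Hasse derivative of order one lowers the degree by one and the symbolic order by at most one. Iterating this reduces $t$ down to $m$.

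Specifically, the claim is: if $m<t\le m+n-1$ and $0\neq f\in J^{(t)}$ has $\deg f=\alpha(J^{(t)})$, then there is a first-order operator $d_{e_j}$ (where $e_j$ is the $j$-th unit vector) with $0\neq d_{e_j}(f)\in J^{(t-1)}$ and $\deg d_{e_j}(f)=\deg f-1$. Granting this, $\alpha(J^{(t-1)})\le \alpha(J^{(t)})-1$ for each such $t$. Chaining these from $t$ down to $m$ gives $\alpha(J^{(m)})\le \alpha(J^{(t)})-(t-m)$, which is the statement.

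\emph{Membership.} Since $J^{(t)}=\bigcap_i \mf{m}_i^t$, Lemma~\ref{diffdecrease} gives $d_{e_j}(\mf{m}_i^t)\subseteq \mf{m}_i^{t-1}$ for every $i$, so $d_{e_j}(f)\in J^{(t-1)}$ automatically.

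\emph{Nonvanishing and degree.} This is where the hypothesis on $p$ enters. First, $f$ is not constant: $t\ge 1$ and $J\neq R$, so a nonzero constant cannot lie in $J^{(t)}$. Let $f_{\mathrm{top}}$ be the top-degree homogeneous component of $f$, of degree $D=\deg f\ge 1$. Since $J^{(t)}\subseteq J^{(m+n-1)}$, we have
\[
D=\alpha(J^{(t)})\le \alpha(J^{(m+n-1)})<p .
\]
Hence $D$ is a unit in $k$, and Euler's identity $\sum_j x_j\,d_{e_j}(f_{\mathrm{top}})=D\,f_{\mathrm{top}}\neq 0$ holds; here $d_{e_j}$ agrees with the ordinary partial derivative $\partial/\partial x_j$. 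So some $d_{e_j}(f_{\mathrm{top}})$ is nonzero, and it is homogeneous of degree $D-1$. The operator $d_{e_j}$ maps each homogeneous component of degree $d$ to degree $d-1$, so the degree-$(D-1)$ part of $d_{e_j}(f)$ is exactly $d_{e_j}(f_{\mathrm{top}})\neq 0$. Therefore $d_{e_j}(f)\neq 0$ and $\deg d_{e_j}(f)=D-1$.

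The main obstacle is precisely this nonvanishing step. In characteristic $p$ a first derivative can kill a polynomial (for instance $x^p$), and the hypothesis $p>\alpha(J^{(m+n-1)})$ is exactly what makes the degree $D$ invertible so that Euler's identity can be used. Only first-order derivatives are ever applied, and the degree bound $D<p$ is needed only for the polynomials $f$ attaining $\alpha(J^{(t)})$ with $m<t\le m+n-1$. It follows from the monotonicity $J^{(t)}\subseteq J^{(m+n-1)}$.
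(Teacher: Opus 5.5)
Your argument is correct and is essentially the paper's: take $f\in J^{(t)}$ of minimal degree, apply a first-order Hasse derivative $d_{e_j}$ to land in $J^{(t-1)}$ via Lemma~\ref{diffdecrease}, and use $\deg f<p$ to force $d_{e_j}(f)\neq 0$ for some $j$. The only variation is how you get nonvanishing: you apply Euler's identity to the leading form, whereas the paper notes that if every $d_{e_i}(f)$ vanished then $f\in k[x_1^p,\dots,x_n^p]$, which forces $\deg f\ge p$. One small slip: both times you cite the containment you have it backwards. For $t\le m+n-1$ it is $J^{(m+n-1)}\subseteq J^{(t)}$, and that is what yields $\alpha(J^{(t)})\le\alpha(J^{(m+n-1)})$.
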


\begin{proof}
We proceed by induction on $t$. The assertion is clear when $t=m$.
Suppose that $m<t\leq m+n-1$ and that
$$
\alpha(J^{(t-1)})
\geq
\alpha(J^{(m)})+t-m-1.
$$

Choose a nonzero polynomial $f\in J^{(t)}$ of minimal degree, so that
$$
\deg(f)=\alpha(J^{(t)}).
$$
Since $t\leq m+n-1$, we have
$J^{(m+n-1)}\subseteq J^{(t)}$, and therefore
$$
\deg(f)=\alpha(J^{(t)})
\leq
\alpha(J^{(m+n-1)})<p.
$$

We claim that $d_{e_i}(f)\neq 0$ for some $1\leq i\leq n$. Indeed,
if $d_{e_i}(f)=0$ for every $i$, then every monomial
$\mathbf{x}^\beta$ occurring in $f$ has each $\beta_i$ divisible
by $p$. Thus,
$$
f\in k[x_1^p,\dots,x_n^p].
$$
Since $f\in J^{(t)}$ and $t\geq1$, the polynomial $f$ is not a
nonzero constant. It would then follow that $\deg(f)\geq p$,
contrary to $\deg(f)<p$.

Choose $i$ such that $d_{e_i}(f)\neq0$. By
Lemma~\ref{diffdecrease},
$$
d_{e_i}(f)\in J^{(t-1)}.
$$
Thus, by the induction hypothesis,
$$
\deg d_{e_i}(f)
\geq
\alpha(J^{(t-1)})
\geq
\alpha(J^{(m)})+t-m-1.
$$
On the other hand, since $d_{e_i}(f)\neq0$,
$$
\deg d_{e_i}(f)\leq \deg(f)-1.
$$
Consequently,
$$
\alpha(J^{(t)})
=
\deg(f)
\geq
\alpha(J^{(m)})+t-m,
$$
which proves the assertion.
\end{proof}

The following result establishes the key inequality needed for Demailly's Conjecture. When $m = 1$, it also proves the inequality predicted by Harbourne–Huneke's conjecture, \cite[Conjecture 4.8]{BHH13}, for the Frobenius subsequence $r = p^e$. 

\begin{theorem}\label{mainineq}
    Fix $m\geq 1$. Let $k$ be an algebraically closed field of characteristic $p> \alpha(J^{(m)})+(n-1)\alpha(J)$. Define $M = q(m+n-1)-(n-1)$, where $q=p^e$ for $e \ge 1$. Then,
    \[\alpha(J^{(M)})\geq q\alpha(J^{(m)})+(q-1)(n-1).\]
\end{theorem}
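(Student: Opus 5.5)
Fix a nonzero $f\in J^{(M)}$ with $\deg f=\alpha(J^{(M)})$. The plan is to use the Frobenius decomposition to extract from $f$ a $q$-th power $h^q$ that still vanishes to high order at every point, and then to bound $\deg h$ using Lemma~\ref{tbound}.

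First, write $f=\sum_{\lambda\in\Lambda_q}h_\lambda^q\mathbf{x}^\lambda$ as in Lemma~\ref{polybasis}. Since $f\neq0$, $\Supp(f)\neq\emptyset$, so we may choose a maximal element $\gamma\in\Supp(f)$ and set $a=|\gamma|$. Then $0\le a\le n(q-1)$. By Lemma~\ref{diffcoeff}, $d_\gamma(f)=h_\gamma^q\neq0$. By Lemma~\ref{diffdecrease}, applied at each $\mf{m}_i$, we get $h_\gamma^q\in\mf{m}_i^{M-a}$ for all $i$. Here $M-a\ge M-n(q-1)=qm-q+1\ge1$.

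Second, we descend from $h_\gamma^q$ to $h_\gamma$. The order function $\ord_{\mf{m}_i}$ is a valuation on $R$, because the associated graded ring of $R_{\mf{m}_i}$ is a polynomial ring and hence a domain. Therefore $q\,\ord_{\mf{m}_i}(h_\gamma)\ge M-a$, so $h_\gamma\in\mf{m}_i^{t}$ with
\[
t=\left\lceil\frac{M-a}{q}\right\rceil .
\]
This holds for every $i$, so $h_\gamma\in J^{(t)}$. From the bounds on $a$ we get $m=\lceil m-1+1/q\rceil\le t\le\lceil M/q\rceil=m+n-1$.

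Third, we compare degrees. Since $\deg(h_\gamma^q\mathbf{x}^\gamma)$ are distinct across residue classes modulo $q$, no cancellation occurs, and
\[
\deg f\ge q\deg h_\gamma+a\ge q\,\alpha(J^{(t)})+a .
\]
To apply Lemma~\ref{tbound} we need $p>\alpha(J^{(m+n-1)})$. This follows from the hypothesis: if $g\in J^{(m)}$ and $g_1,\dots,g_{n-1}\in J$ are of minimal degrees, then $g g_1\cdots g_{n-1}\in J^{(m+n-1)}$, so $\alpha(J^{(m+n-1)})\le\alpha(J^{(m)})+(n-1)\alpha(J)<p$. Lemma~\ref{tbound} then gives $\alpha(J^{(t)})\ge\alpha(J^{(m)})+t-m$. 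Hence
\[
\deg f\ge q\,\alpha(J^{(m)})+q(t-m)+a .
\]
Finally, $qt\ge M-a$ gives $q(t-m)+a\ge M-qm=(q-1)(n-1)$, which is the claimed bound.

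The main obstacle is the second step: passing from $h_\gamma^q\in\mf{m}_i^{M-a}$ to $h_\gamma\in\mf{m}_i^{\lceil (M-a)/q\rceil}$, together with verifying that the resulting $t$ lies in the window $[m,m+n-1]$ where Lemma~\ref{tbound} applies. The valuation property of $\ord_{\mf{m}_i}$ settles the first part, and the bound $a\le n(q-1)$ settles the second.
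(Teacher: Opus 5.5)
Your proposal is correct and matches the paper's proof step for step. Both choose a maximal $\gamma\in\Supp(f)$, use Lemma~\ref{diffcoeff} and Lemma~\ref{diffdecrease} to get $h_\gamma^q\in\mf{m}_i^{M-|\gamma|}$, and use multiplicativity of $\ord_{\mf{m}_i}$ to get $h_\gamma\in J^{(t)}$ with $m\le t\le m+n-1$. Both then finish with Lemma~\ref{tbound} and the no-cancellation degree count.

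One wording fix in your third step: it is not the degrees that lie in distinct residue classes modulo $q$. It is the coordinatewise exponent vectors of the monomials in $h_\lambda^q\mathbf{x}^\lambda$, which are all $\equiv\lambda \pmod q$. Distinct $\lambda$ therefore give disjoint monomial supports, and that disjointness is what rules out cancellation.
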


\begin{proof}
    Since $J^{n-1}J^{(m)}\subseteq J^{(m+n-1)}$, $$p> \alpha(J^{(m)})+(n-1)\alpha(J)\geq \alpha(J^{(m+n-1)}).$$

    Let $0 \not= f\in J^{(M)} = \cap_{i=1}^s \mf{m}_i^M$ be a form of degree $d$. Let $f = \sum_{\lambda\in \Lambda_q}h_\lambda^q \mathbf{x}^\lambda$ be the unique decomposition of $f$ as in Lemma \ref{polybasis}. Consider $\gamma \in \Supp(f)$, a maximal element with respect to the partial ordering in $\mbb{N}^n$. Then, by Lemma \ref{diffdecrease} and Lemma \ref{diffcoeff}, $h_\gamma^q\in \mf{m}_i^{M-|\gamma|}$ for all $i$. Since $R$ is a polynomial ring over a field, $gr_{\mf{m}_i}(R) = \bigoplus_{j=0}^\infty \mf{m}_i^j/\mf{m}_i^{j+1}$ is a reduced ring. Hence, $\ord_{\mf{m}_i}(h_\gamma^q) = q\ord_{\mf{m}_i}(h_\gamma)$ for all $i$. Therefore,
    \[h_\gamma\in \bigcap_{i=1}^s \mf{m}_i^{t} = J^{(t)}\text{\hspace{3mm} where $t = \left\lceil\frac{M-|\gamma|}{q}\right\rceil$}\]

    Note that $|\gamma|\leq n(q-1)$. Hence, $M-|\gamma|\geq q(m-1)+1$. Therefore, $m+n-1\geq t\geq m$. From Lemma \ref{tbound}, 
    $$\deg(h_\gamma)\geq \alpha(J^{(m)})+t-m.$$
    Hence, $\deg(h_\gamma^q)\geq q\alpha(J^{(m)})+q(t-m)$. Moreover, by the uniqueness of the decomposition in Lemma \ref{polybasis}, the monomial supports of the summands $h_\lambda^q\mathbf{x}^\lambda$, for $\lambda\in\Lambda_q$, lie in distinct residue classes modulo $q$. Hence, there can be no cancellation between different summands, and therefore
    $$
d=\deg(f)\geq \deg(h_\gamma^q\mathbf{x}^\gamma)
=q\deg(h_\gamma)+|\gamma|.
$$
By definition, $qt\geq M-|\gamma|$. Hence,
    $$d\geq q\alpha(J^{(m)})+qt-qm+|\gamma|\geq q\alpha(J^{(m)})+M-qm = q\alpha(J^{(m)})+(q-1)(n-1),$$
    as desired.
\end{proof}


\begin{corollary}[Demailly's Conjecture in Characteristic $p$]\label{demaillycharp}
    Let $k$ be an algebraically closed field of characteristic $p$, where $p$ is as in Theorem \ref{mainineq}. Then,

    \[\widehat{\alpha}(J)\geq \frac{\alpha(J^{(m)})+n-1}{m+n-1}.\]
\end{corollary}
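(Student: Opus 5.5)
The plan is to derive the corollary directly from Theorem~\ref{mainineq} by passing to the limit along the Frobenius subsequence $M_e = p^e(m+n-1)-(n-1)$. The only input beyond Theorem~\ref{mainineq} is that the Waldschmidt constant can be computed along any subsequence tending to infinity. Recall that $\widehat{\alpha}(J)=\lim_{r\to\infty}\alpha(J^{(r)})/r$ exists; in the affine setting this follows from Theorem~\ref{affinepoints}, which identifies $\alpha(J^{(r)})$ with $\alpha(I(X)^{(r)})$ for every $r$, so the limit coincides with the projective Waldschmidt constant discussed in the introduction. In particular,
$$\widehat{\alpha}(J)=\lim_{e\to\infty}\frac{\alpha(J^{(M_e)})}{M_e}.$$

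The steps, in order, are as follows. First, note that the hypothesis on $p$ in Theorem~\ref{mainineq} does not depend on $e$. Hence for every $e\ge 1$, with $q=p^e$, we have
$$\frac{\alpha(J^{(M_e)})}{M_e}\ \ge\ \frac{q\,\alpha(J^{(m)})+(q-1)(n-1)}{q(m+n-1)-(n-1)}.$$
Second, divide the numerator and the denominator of the right-hand side by $q$ and let $e\to\infty$. Since $q\to\infty$, the right-hand side converges to $\frac{\alpha(J^{(m)})+n-1}{m+n-1}$. Because the left-hand side converges to $\widehat{\alpha}(J)$, the inequality passes to the limit and gives the claim.

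Alternatively, one can use the infimum description $\widehat{\alpha}(J)=\inf_r \alpha(J^{(r)})/r$. That route would additionally require subadditivity of $r\mapsto\alpha(J^{(r)})$, which follows from $J^{(a)}J^{(b)}\subseteq J^{(a+b)}$, together with Fekete's lemma; this is only needed if one wants to avoid quoting the existence of the limit. I do not expect a real obstacle here. The only point requiring care is justifying that the limit defining $\widehat{\alpha}(J)$ exists in the affine setting, so that restricting to the subsequence $M_e$ is legitimate. This is handled by Theorem~\ref{affinepoints}, or directly by subadditivity and Fekete's lemma.
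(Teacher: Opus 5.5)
Your proposal is correct and matches the paper's proof: apply Theorem~\ref{mainineq} for every $q=p^e$, using that the hypothesis on $p$ does not depend on $e$, and pass to the limit along the subsequence $M=q(m+n-1)-(n-1)$. Your remark that the limit defining $\widehat{\alpha}(J)$ exists in the affine setting (via Theorem~\ref{affinepoints}, or via subadditivity and Fekete's lemma) supplies a justification the paper takes for granted.
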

\begin{proof}
    For all $q=p^e$, from Theorem \ref{mainineq}, 
    \[\frac{\alpha(J^{(M)})}{M} \geq \frac{q\alpha(J^{(m)})+(q-1)(n-1)}{M} = \frac{q\alpha(J^{(m)})+(q-1)(n-1)}{q(m+n-1)-(n-1)}.\]
    We can now evaluate the limit as $q\rightarrow \infty$ to obtain,
    \[\widehat{\alpha}(J) = \lim_{q\rightarrow \infty}\frac{\alpha(J^{(M)})}{M} \geq\lim_{q\rightarrow \infty}\frac{q\alpha(J^{(m)})+(q-1)(n-1)}{q(m+n-1)-(n-1)} = \frac{\alpha(J^{(m)})+n-1}{m+n-1}. \qedhere \]
\end{proof}


\section{Specialization and Demailly's Conjecture in characteristic 0}
In this final section, we shall descend Corollary \ref{demaillycharp} to characteristic 0 to obtain the proof for Theorem \ref{demailly}.

\begin{lemma}\label{fieldext}
Let $K \subseteq L$ be a field extension and let $\{Q_1,...,Q_r\}\in \mbb{A}^n_K$ be distinct points. Let $\mf{n}_i$ be the ideal of $Q_i$ in $K[y_1,...,y_n]$, and let $\mf{n}_{i,L}$ be its extension to $L[y_1,...,y_n]$. Then, for every $s\geq 1$,
\[\min\{\deg f\mid 0\neq f\in \bigcap_{i=1}^r \mf{n}^s_i\} = \min\{\deg f\mid 0\neq f\in \bigcap_{i=1}^r \mf{n}^s_{i,L}\}\]
\end{lemma}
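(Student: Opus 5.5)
The plan is to show that, for each degree $d$, membership in $\bigcap_i \mf{n}_i^s$ is cut out by linear equations with coefficients in $K$. Then the existence of a nonzero solution in degree $\le d$ does not depend on whether we work over $K$ or over $L$.

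Write $A=K[y_1,\dots,y_n]$ and $A_L=L[y_1,\dots,y_n]=A\otimes_K L$. Fix $d\ge 0$ and let $V_d=A_{\le d}$, a finite-dimensional $K$-vector space with monomial basis; then $(A_L)_{\le d}=V_d\otimes_K L$. Let $Q_i=a^{(i)}\in K^n$. By Lemma~\ref{diffdecrease}, applied over the field $L$ with the point $a^{(i)}\in K^n\subseteq L^n$, a polynomial $f\in A_L$ lies in $\mf{n}_{i,L}^s$ if and only if $\tau_{a^{(i)}}(d_\lambda(f))=0$ for all $|\lambda|<s$. The same criterion over $K$ describes $\mf{n}_i^s$. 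Note that Lemma~\ref{diffdecrease} holds over any field; its proof never uses algebraic closedness, and $\mf{n}_{i,L}$ is exactly the ideal $(y_1-a^{(i)}_1,\dots,y_n-a^{(i)}_n)$ of $A_L$.

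Each map $\ell_{i,\lambda}:f\mapsto \tau_{a^{(i)}}(d_\lambda(f))$ is a linear functional on $V_d$. Its values on monomials are $\binom{\beta}{\lambda}(a^{(i)})^{\beta-\lambda}\in K$. Hence the $L$-linear functional on $V_d\otimes_K L$ given by the same formula is the base change $\ell_{i,\lambda}\otimes L$. We obtain
$$\Bigl(\bigcap_i \mf{n}_{i,L}^s\Bigr)\cap (A_L)_{\le d}=\ker(\Phi\otimes_K L)=\ker(\Phi)\otimes_K L,$$
where $\Phi:V_d\to K^N$ collects all the $\ell_{i,\lambda}$. The second equality holds because $L$ is flat over $K$; concretely, the rank of a matrix with entries in $K$ does not change when we pass to $L$. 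In particular, $\ker\Phi\neq 0$ if and only if $\ker(\Phi\otimes L)\neq 0$.

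Now take $d$ to be the minimum on either side of the claimed equality. The intersection $\bigcap_i\mf{n}_i^s$ contains $\prod_i\mf{n}_i^s$, so it contains a nonzero element and both minima are finite. A nonzero element of degree $\le d$ exists over $K$ if and only if one exists over $L$, for every $d$, so the two minima coincide.

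The only step needing care is the identification of membership in the extended ideal $\mf{n}_{i,L}^s$ with the vanishing of the base-changed functionals. This is handled by applying Lemma~\ref{diffdecrease} directly over $L$: the point has coordinates in $K$, and Hasse derivatives commute with extension of scalars because they are defined on monomials with integer coefficients. Everything else is linear algebra.
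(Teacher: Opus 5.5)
Your proposal is correct and is essentially the paper's argument. Both describe $\bigl(\bigcap_i \mf{n}_i^s\bigr)\cap K[y_1,\dots,y_n]_{\le d}$ as the kernel of the Hasse-derivative evaluation map via Lemma~\ref{diffdecrease}, and both use flatness of $K\subseteq L$ to see that this kernel is nonzero over $K$ if and only if it is nonzero over $L$, degree by degree; you also make explicit two points the paper leaves implicit, namely that Lemma~\ref{diffdecrease} applies over $L$ to the extended ideals and that both minima are finite.
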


\begin{proof}
    Consider the evaluation map
    \[\phi_{s,d}^{K}:K[y_1,\dots y_n]_{\leq d}\rightarrow \bigoplus_{\substack{1\leq i \leq r\\ |\beta|<s}}K\]
    \[f\rightarrow (d_\beta(f)(P_i))_{i,\beta}\]
    By Lemma \ref{diffdecrease}, 
    $$\ker \phi^K_{s,d} = K[y_1,\dots,y_n]_{\leq d}\bigcap \big(\cap_{i=1}^r \mf{n}^s_i\big).$$ 
    Since $K\subseteq L$ are field extensions, they are faithfully flat. Hence, $\ker \phi^L_{s,d} = \ker \phi^K_{s,d}\otimes L$. Therefore, $\ker \phi^K_{s,d}=0$ if and only if $\ker \phi^L_{s,d}=0$.
\end{proof}

\begin{lemma}\label{maindescent}
    Let $k$ be an algebraically closed field of characteristic zero and let $X\subset \mathbb{A}_k^n$ be a finite set of distinct points. Fix positive integers $m$, $N_0$. Then, there exists an algebraically closed field $L$ of characteristic $p>N_0$ and a finite set of distinct points $Y\subset \mathbb{A}_L^n$ such that
    \[\alpha(I(X)^{(s)})\geq \alpha(I(Y)^{(s)}) \text{ for all $s\geq 1$},\]
    \[\alpha(I(X)^{(m)}) = \alpha(I(Y)^{(m)}).\]
\end{lemma}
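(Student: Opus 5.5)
We will construct $Y$ by spreading the points out over a finitely generated $\mathbb{Z}$-algebra and reducing modulo a suitable maximal ideal. Write $X=\{P_1,\dots,P_r\}$ with $P_i=(p_{i1},\dots,p_{in})\in k^n$.

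\emph{Setup.} Let $A_0=\mathbb{Z}[p_{ij}]\subset k$, a finitely generated domain. Distinctness of the points means that for each pair $i\neq i'$ some coordinate difference $p_{ij}-p_{i'j}$ is nonzero. Set $t=\alpha(I(X)^{(m)})$ and choose a nonzero $f\in I(X)^{(m)}$ of degree $t$. By Lemma~\ref{fieldext} (with $K=\mathrm{Frac}(A_0)\subseteq k$), we may take $f$ with coefficients in $K$, and after clearing denominators in $A_0$. Fix one nonzero coefficient $c$ of $f$.

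The statement $\alpha(I(X)^{(m)})\ge t$ says that the evaluation map $\phi^K_{m,t-1}$ from the proof of Lemma~\ref{fieldext} is injective. Its matrix has entries in $A_0$: these are Hasse derivatives of monomials evaluated at the $P_i$, which are polynomials in the $p_{ij}$ with integer binomial coefficients. Injectivity means some maximal minor $\Delta\in A_0$ is nonzero. Let $A=A_0[1/D]$, where $D$ is the product of all the chosen nonzero coordinate differences, together with $c$ and $\Delta$.

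\emph{Choosing the fiber.} The ring $A$ is a nonzero finitely generated $\mathbb{Z}$-domain. If $A\otimes\mathbb{Q}=0$ then $A$ is a field finitely generated over $\mathbb{F}_p$, which is impossible since $A\subseteq k$ has characteristic $0$. Hence $A$ is flat over $\mathbb{Z}$, and its image in $\Spec\mathbb{Z}$ contains a dense open set. By the Nullstellensatz for Jacobson rings, maximal ideals $\mf{n}\subset A$ have finite residue fields. So we can pick $\mf{n}$ with $\operatorname{char}(A/\mf{n})=p>N_0$: choose a prime $p>N_0$ with $A/pA\neq0$, which holds since $A[1/N]$ is nonzero and generic flatness gives that $A/pA\ne 0$ for almost all $p$, and take $\mf{n}\supseteq pA$. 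Let $L=\overline{A/\mf{n}}$, and let $Y$ be the images $\bar P_i$.

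The points $\bar P_i$ are distinct because $D$ is a unit. The reduction $\bar f$ is nonzero because $c$ is a unit. Moreover $\bar f\in I(Y)^{(m)}$: by Lemma~\ref{diffdecrease} the conditions $d_\beta(f)(P_i)=0$ for $|\beta|<m$ are polynomial identities over $A$, and Hasse derivatives commute with base change. Thus $\alpha(I(Y)^{(m)})\le t$. Since $\Delta$ is a unit, the reduced map $\phi^{A/\mf{n}}_{m,t-1}$ stays injective, so by Lemma~\ref{fieldext} we get $\alpha(I(Y)^{(m)})\ge t$.

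\emph{The inequality for all $s$.} This is the main obstacle, because it involves infinitely many $s$ while we may invert only finitely many elements. The direction needed, however, is the easy one: upper semicontinuity of kernel dimension. Fix $s$ and let $d=\alpha(I(X)^{(s)})$. Then $\ker\phi^K_{s,d}\neq0$, so the matrix of $\phi_{s,d}$ over $A$ has rank below the number of columns over $K$. Every maximal minor vanishes in $A$, hence vanishes in $A/\mf{n}$. So the rank over $A/\mf{n}$ is no larger, and $\ker\phi^{A/\mf{n}}_{s,d}\neq0$. Lemma~\ref{fieldext} then gives $\alpha(I(Y)^{(s)})\le d$. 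This holds for every $s$ with no extra inversion required, which resolves the obstacle; the only care needed is to keep everything defined over the domain $A\subseteq k$, so that the minors are genuinely zero rather than merely zero over $k$.
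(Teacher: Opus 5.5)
Your proposal is correct and follows essentially the same route as the paper. Both arguments spread the points over a finitely generated $\mathbb{Z}$-algebra and invert the coordinate differences together with a nonzero maximal minor $\Delta$ of the matrix of $\phi_{m,t-1}$. Both then reduce modulo a maximal ideal lying over a large prime and use the vanishing of maximal minors (the paper's ``rank cannot increase after quotienting'') to get $\alpha(I(Y)^{(s)})\le\alpha(I(X)^{(s)})$ for all $s$, before passing to the algebraic closure via Lemma~\ref{fieldext}.

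The differences are cosmetic: you use generic flatness where the paper uses Chevalley's theorem to find the prime. Inverting the coefficient $c$ is redundant, since your all-$s$ minor argument with $s=m$ already gives $\alpha(I(Y)^{(m)})\le t$. Also, a domain of positive characteristic need not be a field, though that case is excluded anyway because $A\subseteq k$.
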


\begin{proof}
    Let $X = \{P_1,\dots,P_r\}\subset \mathbb{A}_k^n$ where $P_i = (p_{i1},\dots p_{in})$. Consider the finitely generated $\mbb{Z}$-algebra $\mbb{Z}[p_{ij}\mid 1\leq i\leq r, 1\leq j\leq n]$. Our first step is constructing an appropriate algebra where the points remain distinct over all fibres of the map to $\mbb{Z}$.
    
    For each $P_i$ and $P_j$ there exists an index $l_{ij}$ such that $p_{il_{ij}}-p_{jl_{ij}}\neq 0$. Let $Q = \prod_{i\neq j}(p_{il_{ij}}-p_{jl_{ij}})$ and $B_0 = \mbb{Z}[p_{ij}\mid 1\leq i\leq r, 1\leq j\leq n]_Q$. After localization, no two points $P_i$ and $P_j$ will have equivalent coordinates over the same prime ideal of $B_0$. Hence, they will be distinct in $(B_0/\mathfrak{p})_{\mathfrak{p}}$, where $\mathfrak{p}$ is any prime ideal in $B_0$. Consider the evaluation map:
    \[\phi_{s,d}^{B_0}:B_0[x_1,\dots x_n]_{\leq d}\rightarrow \bigoplus_{\substack{1\leq i \leq r\\ |\beta|<s}}B_0\]
    \[f\rightarrow (d_\beta(f)(P_i))_{i,\beta}\]
    Let $b_j = \alpha(I(X)^{(j)})$. By definition, $\phi_{m,b_m-1}^{B_0}\otimes_{B_0} k$ is injective. Hence, there exists a non-zero maximal minor $\Delta$ of the matrix of $\phi^{B_0}_{m,b_m-1}$. Set 
    $$B:=(B_0)_\Delta.$$
    Now $B$ is a finitely generated $\mbb{Z}$-algebra where the induced map $\phi_{m,b_m-1}^{B/\mathfrak{q}}$ has a non-zero maximal minor for any prime ideal $\mathfrak{q}\subset B$.

    We proceed to isolate an appropriate fiber of $B$ whose residue field has the desired properties. Since $B,\mathbb{Z}$ are Noetherian, and the morphism $\psi: \Spec(B)\rightarrow \Spec(\mbb{Z})$ is of finite type, by Chevalley's theorem, the image of $\psi$ is constructible. Furthermore, the generic point $0$ of $\Spec(\mbb{Z})\in im(\psi)$. Since $\Spec(\mbb{Z})$ is irreducible this implies that $im(\psi)$ contains an open dense subset of $\Spec(\mbb{Z})$. Therefore, all except finitely many primes are in $im(\psi)$. Choose 
    $$p>N_0\in im(\psi).$$ 
    It can be seen that $(B/pB)_p$ is a finitely generated $\mathbb{F}_p$-algebra with $\Spec(B_p/pB_p)\neq \phi$. Hence, $B_p$ has a non-zero maximal ideal $\mf{m}$ containing $pB_p$. Let $K =B_p/\mf{m}$.

    Let $\overline{p_{ij}}$ be the coordinates of the points $P_i$ in $K$. Define $Y_K=\{Q_1,\dots ,Q_r\}\subset \mathbb{A}_K^n$, where $Q_i = (\overline{p_{i1}},\dots, \overline{p_{in}})$. From our construction, all points $Q_i$ are distinct. Furthermore, $\phi_{m,b_m-1}^{K}$ has a non-zero maximal minor. Since $K$ is a field, $\phi_{m,b_m-1}^K$ is injective. Hence, 
    $$\alpha(I(Y_K)^{(m)})\geq b_m.$$

    Since $\Frac(B)\subseteq k$, by Lemma \ref{fieldext}, for $t\geq1$,
    $$\ker \phi_{t,b_t}^{k}\neq 0 \text{ and }\ker \phi_{t,b_t}^{\Frac(B)}\neq 0.$$ 
    Therefore, by multiplying by the common denominator of the coefficients of some $f\in \ker \phi_{t,b_t}^{\Frac(B)}$, we get that $\ker\phi_{t,b_t}^B\neq 0$ and thus $\ker\phi_{t,b_t}^{B_p}\neq 0$. Since the rank of $\phi_{t,b_t}^{B_p}$ cannot increase after quotienting, 
    $$\ker\phi_{t,b_t}^{K}\neq 0.$$ 
    This implies that $\alpha(I(Y_K)^{(t)})\leq b_t$ for all $t\geq 1$. In particular, 
    $$\alpha(I(Y_K)^{(m)}) = \alpha(I(X)^{(m)}).$$

    Finally, take $L = \overline{K}$ and $Y_L = \{Q_1,\dots, Q_r\}$. By Lemma \ref{fieldext},
    \[\alpha(I(X)^{(s)})\geq \alpha(I(Y_K)^{(s)}) = \alpha(I(Y_L)^{(s)}) \text{ for all $s\geq 1$},\]
    \[\alpha(I(X)^{(m)}) = \alpha(I(Y_K)^{(m)}) = \alpha(I(Y_L)^{(m)}). \qedhere \]
\end{proof}

We now have all the ingredients necessary to prove Theorem \ref{demailly}.

\begin{proof}[Proof of Theorem \ref{demailly}]
    Fix $m\geq 1$. By Theorem \ref{affinepoints}, For any set of points $X = \{P_1,\dots P_r\}\subset \mbb{P}^n$ with ideals $I(P_i)$, there exist corresponding points in $X' = \{P'_1,\dots, P'_r\} \subset \mathbb{A}_k^n$ with ideals $\mf{m}_i\subset k[x_1,\dots x_n]$ such that $\alpha(I(X)^{(s)}) = \alpha(\cap_{i=1}^r m_i^s) = \alpha(I(X')^{(s)}).$ for all $s\geq 1$. By Lemma \ref{maindescent}, there exists an algebraically closed field $L$ of characteristic $p>\alpha(I(X')^{(m)})+(n-1)\alpha(I(X'))$ and distinct points $Y =\{Q_1,\dots,Q_r\}\subset \mbb{A}_L^n$ such that
     \[\alpha(I(X')^{(s)})\geq \alpha(I(Y)^{(s)}) \text{ for all $s\geq 1$},\]
    \[\alpha(I(X')^{(m)}) = \alpha(I(Y)^{(m)}).\]
    
Since the first inequality above holds for all $s\geq 1$, taking $s=1$ gives
$\alpha(I(X'))\geq \alpha(I(Y))$. Therefore,
\[
\begin{aligned}
p
&>\alpha(I(X')^{(m)})+(n-1)\alpha(I(X'))\\
&=\alpha(I(Y)^{(m)})+(n-1)\alpha(I(X'))\\
&\geq \alpha(I(Y)^{(m)})+(n-1)\alpha(I(Y)).
\end{aligned}
\]
Thus, the hypothesis of Corollary \ref{demaillycharp} is satisfied for $Y$.
    Now by Corollary \ref{demaillycharp}, we have
    \[\widehat{\alpha}(I(X)) = \widehat{\alpha}(I(X')) \geq \widehat{\alpha}(I(Y))\geq \frac{\alpha(I(Y)^{(m)})+n-1}{m+n-1} = \frac{\alpha(I(X')^{(m)})+n-1}{m+n-1} = \frac{\alpha(I(X)^{(m)})+n-1}{m+n-1}.\]
\end{proof}

\bibliographystyle{amsplain}
\bibliography{refs}

\end{document}